\def\blfootnote{\xdef\@thefnmark{}\@footnotetext}
\newtheorem{thm}{Theorem}[section]
\newtheorem{cor}[thm]{Corollary}
\newtheorem{lemma}[thm]{Lemma}
\newtheorem{prop}[thm]{Proposition}
\newtheorem{conj}[thm]{Conjecture}
\theoremstyle{definition}
\newtheorem{df}[thm]{Definition}
\newtheorem{pri}[thm]{Meta-Conjecture}
\theoremstyle{remark}
\newtheorem{rem}[thm]{Remark}
\newcommand{\cE}{\mathcal{E}}
\newcommand{\cN}{\mathcal{N}}
\newcommand{\e}{\varepsilon }
\newcommand{\N}{\mathbb{N}}
\newcommand{\Z}{{\mathbb Z}}
\newcommand{\dc}{\operatorname{dc}}
\newcommand{\ds}{\operatorname{ds}}
\newcommand{\supp}{\operatorname{Supp}}
\newcommand{\ball}{\mathbb{B}}
\begin{document}

\title[Degree of commutativity of infinite groups]{Degree of commutativity of infinite
groups}

\address{Vanderbilt University, Department of Mathematics, 1326 Stevenson Center,
Nashville, TN 37240, USA.}
\author{Yago Antol\'{i}n}
\email[Yago Antol\'{i}n]{yago.anpi@gmail.com}

\address{Department of Mathematics, University of Southampton, Highfield, Southampton
SO17 1BJ, United Kindgom.}
\author{Armando Martino}
\email[Armando Martino]{A.Martino@soton.ac.uk}

\address{Departament de Matem\`atiques, EPSEM, Universitat Polit\`{e}cnica de Catalunya,
Av. Bases de Manresa 61--73, 08242-Manresa, Barcelona (Catalonia).}
\author{Enric Ventura}
\email[Enric Ventura]{enric.ventura@upc.edu}


\date{\today}

\begin{abstract}
We prove that, in a finitely generated residually finite group of subexponential
growth, the proportion of commuting pairs is positive if and only if the group is
virtually abelian. In particular, this covers the case where the group has polynomial
growth (i.e., virtually nilpotent groups, where the hypothesis of residual finiteness
is always satisfied). We also show that, for non-elementary hyperbolic groups, the
proportion of commuting pairs is always zero.
\end{abstract}

\keywords{Abelian groups, degree of commutativity, polynomial growth}

\subjclass[2010]{20P05}
\maketitle


\section{Introduction}

In a finite group $G$, a way of studying a property one is interested in is by
counting, or estimating, the probability that this property holds among the elements of
$G$. For example, we can look at the proportion of pairs of elements in $G$ which
commute to each other,
 $$
\dc(G)=\frac{|\{(u,v)\in G^{\, 2} \ : \ uv=vu \}|}{|G|^2},
 $$
and call it the \emph{degree of commutativity} of $G$. Of course, $\dc(G)$ is a
rational number between 0 and 1, $\dc(G)=1$ if and only if $G$ is abelian, and the
closer $\dc(G)$ is to 1 the ``more abelian" $G$ will be. An interesting result due to
Gustafson~\cite{G} states that there is no finite group with $\dc(G)$ strictly between
$5/8$ and $1$, i.e.,

\begin{thm}[Gustafson, \cite{G}]\label{thm:5/8}
Let $G$ be a finite group. If $\dc(G)>5/8$ then $G$ is abelian.
\end{thm}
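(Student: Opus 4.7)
The plan is to rewrite $\dc(G)$ in terms of centralizer sizes and then exploit the elementary fact that a finite non-abelian group has its centre of index at least $4$. Grouping the commuting pairs by their second coordinate, for each $v\in G$ the number of $u$ with $uv=vu$ equals $|C_G(v)|$, so
$$\dc(G) \;=\; \frac{1}{|G|^2}\sum_{v\in G}|C_G(v)|.$$
This is the identity I would take as the starting point.

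Next I would split the sum according to whether $v$ lies in the centre $Z(G)$. If $v\in Z(G)$, then $C_G(v)=G$ and its contribution is $|G|$. If $v\notin Z(G)$, then $C_G(v)$ is a proper subgroup of $G$, so by Lagrange $|C_G(v)|\leq |G|/2$. Writing $z\coloneqq |Z(G)|/|G|$ for brevity, summing these two contributions yields
$$\dc(G) \;\leq\; z\cdot 1 + (1-z)\cdot\tfrac{1}{2} \;=\; \tfrac{1}{2} + \tfrac{z}{2}.$$

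Finally I would invoke the standard observation that if $G/Z(G)$ is cyclic then $G$ is abelian; contrapositively, if $G$ is non-abelian then $G/Z(G)$ is not cyclic, and in particular $|G/Z(G)|\geq 4$, i.e.\ $z\leq 1/4$. Plugging this into the previous inequality gives $\dc(G)\leq 1/2+1/8=5/8$, contradicting the hypothesis $\dc(G)>5/8$. Hence $G$ must be abelian.

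The argument is almost entirely bookkeeping once the centralizer reformulation is written down; the only conceptual input is the classical lemma that a cyclic central quotient forces abelianness, which supplies the decisive index bound $[G:Z(G)]\geq 4$. I do not anticipate a genuine obstacle here, but I would be careful to note that the bound $5/8$ is sharp: it is attained, for instance, by the dihedral group of order $8$, for which $|Z(G)|=2$ and both of the inequalities used above are equalities.
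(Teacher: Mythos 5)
Your proof is correct and is precisely the ``very elementary and straightforward'' argument the paper alludes to without reproducing; the paper simply cites Gustafson~\cite{G} for Theorem~\ref{thm:5/8}, so there is no in-paper proof to compare against. Your centralizer reformulation, the split over $Z(G)$ with the Lagrange bound $|C_G(v)|\leq |G|/2$ for non-central $v$, and the use of ``$G/Z(G)$ cyclic $\Rightarrow$ $G$ abelian'' to force $[G:Z(G)]\geq 4$ is exactly the classical route; one small stylistic note is that the paper cites $Q_8$ rather than $D_8$ as the extremal example, though both of order $8$ with centre of order $2$ work equally well.
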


Although at a first look this result could seem somehow surprising, the proof is very
elementary and straightforward. The bound in Gustafson's result is tight, since easy
computations show that the degree of commutativity of the quaternion group $Q_8$ is,
precisely, $\dc(Q_8)=5/8$.

This implies that the range of the map $\dc$ from finite groups to $[0,1]$ does not
cover the whole set of rational points $\mathbb{Q}\cap [0,1]$, but leaves (infinitely
many!) gaps on it; also, clearly, $\dc(G)\neq 0$ for every finite group $G$. Several
interesting conjectures have been established about this set, and some results have
been proven, see Hegarty~\cite{He} and the references therein. This is in stunning
contrast with the situation for semigroups: Ponomarenko--Selinski showed in~\cite{Po}
that, for \emph{every} rational number $q\in (0,1]$ there exists a semigroup $S$ such
that $\dc(S)=q$ (with the obvious analogous definition of $\dc$ for semigroups).

The aim of this paper is to generalize this result to the context of infinite groups.
Of course, in an infinite ambient the first obvious difficulty is the meaning of
``proportion of elements". For compact groups,  Gustafson \cite{G} himself proved a version of Theorem \ref{thm:5/8} using the Haar measure.
In general, considering only one measure might not be enough and we
 propose to deal with this by considering in $G$ a
sequence of probability measures $\{\mu_n\}_{n\in \N}$.
With them, we can compute the probability (with respect to
$\mu_n^{\times 2}$, the product measure in $G^{\, 2}=G\times G$) that two elements in
\emph{$\supp(\mu_n)$} commute, and then take the limit when $n$ tends to infinity. A
first technical problem is the existence of this limit (to begin with, we will take the
limsup); a second problem is that, in principle, all depends on the choice of the
sequence of measures $\{\mu_n\}_{n\in \N}$, i.e., on the \emph{direction} taken
\emph{to infinity}.

To state the main definitions we are going to adopt a more general point of view, and
will define the notion of satisfiability of an arbitrary set of equations (not just of
the commuting equation $xy=yx$). Let $k\geqslant 1$ and $F_k =F(\{x_1, \ldots ,x_k\})$
be the (rank $k$) free group on the alphabet $\{x_1, \ldots ,x_k\}$. An element $w\in
F_k$ will be called an \emph{equation in $k$   variables} (thinking of $w(x_1,\ldots ,x_k
)=1$); and a $k$-tuple of elements in $G$, say $(g_1, \ldots ,g_k)\in G^{\, k}=G\times
\cdots \times G$, is a \emph{solution to $w$ in $G$} if $w(g_1, \ldots ,g_k)=_G 1$.
Analogously, a (possibly infinite) set of elements $\cE \subseteq F_k$ is a
\emph{system of equations in $k$variables}, and we define solutions in a group $G$ in
the obvious way. For example, a pair of elements in a group $G$, say $(g_1, g_2)\in
G^{\, 2}$, form a solution to the equation $[x_1,x_2]=1$ if and only if they commute in
$G$.

A word $w\in F_k$ is said to be a \emph{law} in a group $G$ if every $k$-tuple in
$G^{\, k}$ is a solution to $w$. For example, $[x_1,x_2]$ is a law in $G$ if and only
if $G$ is abelian; or $[[x_1,x_2],[x_3,x_4]]$ is a law in $G$ if and only if $G$ is
metabelian. Also, we say that $w$ is a \emph{virtual law} in $G$ if $w$ is a law in a
certain finite index subgroup $H\leqslant G$. Analogous definitions apply to arbitrary
subsets $\cE\subseteq F_k$.

\begin{df}\label{gendef}
Let $G$ be a group, let $\{\mu_n\}_{n\in \N}$ be a sequence of probability measures on $G$, and let $\cE \subseteq F_k$ be a set of equations in $k$ variables. We define the {\it degree of satisfiability of $\cE$ in $G$ with respect to $\{\mu_n\}_{n\in \N}$} as
 $$
\ds(G,\cE,\{\mu_n\}_{n\in \N})=\limsup_{n\to \infty} \mu_n^{\times k}\big( \{(g_1,\ldots ,g_k)\in
G^{\, k} \ : \  (g_1,\ldots ,g_k) \text{ is a solution of } \cE\}\big),
$$
where $\mu_n^{\times k}$ is the product measure in $G^{\, k}$. For the particular case
$k=2$ and $\cE=\{[x_1,x_2]\}$, we denote $\ds(G,\{[x_1,x_2]\},\{\mu_n\}_{n\in \N})$
simply as $\dc(G, \{\mu_n\}_{n\in \N})$ and refer to it as the \emph{degree of
commutativity of $G$ with respect to $\{\mu_n\}_{n\in \N}$}.
\end{df}

Suppose now that $G$ is virtually abelian, i.e., it has an abelian subgroup $H\leqslant
G$ of finite index, say $d=[G:H]$. Roughly speaking, one of each $d^2$ pairs of
elements $(g_1, g_2)\in G^{\, 2}$ belongs to $H^2$ and so commute. Hence, intuitively,
the degree of commutativity of $G$ 
should be about $1/d^2>0$, or more. Let us accept that the actual value of this limsup
is not very significant due to the subtleties of the measures $\mu_n$; but even with
this fact, it seems plausible to think that ``reasonable" sequences of probability
measures $\{\mu_n\}_{n\in \N}$ on $G$ will keep the mass of $H$ visible in $G$ (as
approximately $(1/d)$-th part of the total), and so will still satisfy $\dc(G,
\{\mu_n\}_{n\in \N})>0$.

Suppose now that $G$ is a group and $\{\mu_n\}_{n\in \N}$ a sequence of probability
measures on it, in such a way that $\dc(G, \{\mu_n\}_{n\in \N})>0$. It is interesting
to ask whether this visible amount of commutativity in $G$ can be organized in an
algebraic way into forming, for example, an abelian subgroup of finite index; this
would say that $G$ is virtually abelian, providing an interesting convers to the
argumentation line in the previous paragraph. We tend to think that this is
philosophically the case, even for arbitrary systems of equations (maybe under some
additional technical assumptions), and formulate the following

\begin{pri}\label{meta}
Let $G$ be a group, let $\{\mu_n\}_{n\in \N}$ be a sequence of ``reasonable"  measures on $G$, and let $\cE \subseteq F_k$ be a set of
equations in $k$ variables. Then,
 $$
\ds(G,\cE,\{\mu_n\}_{n\in \N})>0 \quad \Longleftrightarrow \quad \cE \text{ is a virtual law
in } G.
 $$
In particular,
 $$
\dc(G,\{\mu_n\}_{n\in \N})>0 \quad \Longleftrightarrow \quad G \text{ is virtually abelian}.
 $$
\end{pri}

The main results in this paper follow this philosophy and are clear special cases of
this meta-conjecture, see Theorems~\ref{thm:main} and~\ref{thm:nafs} below.

We will make no attempt to give a precise definition of what ``reasonable" should mean
here. Just to mention some vague idea, it seems clear one should impose conditions on
these $\mu_n$'s forcing them to tend to cover the whole $G$ with enough homogeneity.
For example, if all the supports are finite (case in which the computations will be of combinatorial nature), it seems natural to ask them to form an ascending sequence $\supp(\mu_1)\subseteq \supp(\mu_2)\subseteq \cdots$ eventually covering the whole group $G$, i.e., $G=\cup_{n\in \N} \supp(\mu_n)$.

We will fix particularly easy and natural sequences of such probability measures (natural from the algebraic point of view) and will work with them, proving theorems in the spirit of the meta-conjecture. The following is one of the most natural ways of doing this. Assume $G$ is finitely generated, fix a finite set of generators $X$, consider the \emph{ball of radius $n$} centered at $1$ on the Cayley graph $\Gamma(G,X)$ of $G$ with respect to $X$, denoted $\ball_X(n)$, and choose $\mu_n$ to be the \emph{uniform measure} on $\ball_X(n)$ (i.e., $\mu_n$ assigns probability $1/|\ball_X(n)|$ to each element in $\ball_X(n)$, and 0 to all other elements in $G$). Clearly, this makes sense because $|\ball_X(n)|<\infty$, and satisfies the natural condition imposed above: $\{ 1\}=\ball_X(0)\subseteq \ball_X(1)\subseteq \ball_X(2)\subseteq \cdots$ and $G=\cup_{n\in \N} \ball_X(n)$.

Another interesting possibility consists on taking, into the $\ball_X(n)$, the
distribution associated to a \emph{simple random walk} of length $n$ in the Cayley
graph $\Gamma(G,X)$ (i.e., for $g\in G$, define $\mu_n(g)$ to be the probability that a
random walk of length $n$ in $\Gamma(G,X)$ starting at 1 ends at the element $g$; of
course, $\mu_n(g)=0$ if and only if $g\not\in \ball_X(n)$ and so, $\supp(\mu_n
)=\ball_X(n)$).

In the present paper we will consider only the uniform measure on balls, and will
concentrate our efforts to study the commutativity equation $\cE =\{ [x_1,x_2]\}$. We
wonder whether our results can be extended to other (systems of) equations, and other
probability measures on groups.

Particularizing Definition~\ref{gendef} to the commutativity case $\cE =\{
[x_1,x_2]\}$, and with respect to the uniform measure on $X$-balls, let us simplify
notation and names for the rest of the paper.

\begin{df} Let $G$ be a finitely generated group, and $X$ a finite generating set. The
\emph{degree of commutativity of $G$ with respect to $X$}, denoted $\dc_X(G)$, is
defined as
 $$
\dc_X(G) =\limsup_{n \to \infty} \frac{|\{(u,v)\in (\ball_X(n))^2 \ :
\ uv=vu \}| }{|\ball_X(n)|^2}.
 $$
\end{df}

We keep the subindex to specify the dependency of this notion, in general, from the set of generators, and to distinguish from the corresponding notion for finite groups. Of course, if $G$ is finite then $\dc_X(G)=\dc(G)$ for every generating set $X$ of $G$.

The main result in the paper is the following one, clearly in the spirit of~\ref{meta}.

\begin{thm}\label{thm:main}
Let $G$ be a finitely generated residually finite group of subexponential growth, and
let $X$ be a finite generating set for $G$. Then,
\begin{enumerate}
\item[(i)] $\dc_X(G)>0$ if and only if $G$ is virtually abelian;
\item[(ii)] $\dc_X(G)>5/8$ if and only if $G$ is abelian.
\end{enumerate}
\end{thm}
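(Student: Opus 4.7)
The $(\Leftarrow)$ directions in both parts are a ball-counting exercise. Let $H\leq G$ be an abelian subgroup of index $d$ with a transversal of $X$-length $\leq C$. Coset decomposition gives $|\ball_X(n)|\leq d\,|H\cap \ball_X(n+C)|$, and every pair in $H\times H$ commutes, so the proportion of commuting pairs in $\ball_X(n+C)^2$ is at least $(1/d^2)(|\ball_X(n)|/|\ball_X(n+C)|)^2$. Subexponential growth yields a subsequence along which $|\ball_X(n)|/|\ball_X(n+C)|\to 1$ (any persistent gap would force exponential growth), whence $\dc_X(G)\geq 1/d^2$; in particular, $G$ abelian gives $\dc_X(G)=1>5/8$.

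For the harder directions I plan to relate $\dc_X(G)$ to $\dc(Q)$ for each finite quotient $\pi\colon G\twoheadrightarrow Q=G/N$. Fixing a transversal of $N$ in $G$ of $X$-length $\leq C$ and writing $\ball_q(n)=\pi^{-1}(q)\cap \ball_X(n)$, one has the sandwich $|N\cap \ball_X(n-C)|\leq |\ball_q(n)|\leq |N\cap \ball_X(n+C)|$ for every $q\in Q$. Since commuting pairs in $G$ project to commuting pairs in $Q$, grouping by cosets yields
\[
\frac{|\{(u,v)\in \ball_X(n)^2 : uv=vu\}|}{|\ball_X(n)|^2}\;\leq\; \dc(Q)\cdot\Bigl(\frac{|N\cap \ball_X(n+C)|}{|N\cap \ball_X(n-C)|}\Bigr)^{\!2}.
\]
Subexponentiality of $|N\cap \ball_X(n)|$ forces the ratio on the right to approach $1$ along a set of density one in $n$; extracting from a $\limsup$-realising subsequence for the left-hand side a further subsequence along which the growth ratio tends to $1$, one concludes $\dc(Q)\geq \dc_X(G)$ for \emph{every} finite quotient $Q$ of $G$.

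Granting this, (ii) is immediate: $\dc_X(G)>5/8$ forces $\dc(Q)>5/8$ for every finite quotient $Q$, so by Theorem~\ref{thm:5/8} each $Q$ is abelian, and residual finiteness forces $G$ itself to be abelian. For (i), I invoke P.\,M.~Neumann's theorem: there is a function $D(\varepsilon)$ such that any finite group $Q$ with $\dc(Q)\geq\varepsilon$ has an abelian subgroup of index $\leq D(\varepsilon)$. Setting $\varepsilon=\dc_X(G)>0$, every finite quotient of $G$ has an abelian subgroup of uniform index $\leq D$. Since $G$ is finitely generated it has only finitely many subgroups $H_1,\ldots,H_m$ of index $\leq D$; if none were abelian, picking $[a_i,b_i]\neq 1$ in each $H_i$ and then finding a finite-index normal $N\lhd G$ contained in every core $H_i^G$ yet missing every $[a_i,b_i]$ (possible by residual finiteness) would make the index-$\leq D$ subgroups of $G/N$ precisely $\{H_i/N\}$, none abelian -- contradicting the property for $Q=G/N$. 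Hence some $H_i$ is abelian and $G$ is virtually abelian. The principal obstacle is the subsequence alignment in the middle paragraph: subexponential growth only controls the coset growth ratio on a density-one set, so extracting from the $\limsup$-subsequence of the commuting-pair ratio a sub-subsequence meeting this set of good $n$'s requires careful use of the uniform bound $|\ball_X(n+1)|/|\ball_X(n)|\leq 2|X|+1$.
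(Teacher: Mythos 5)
Your $(\Leftarrow)$ directions and part~(ii) follow essentially the paper's route, but the key estimate and part~(i)$(\Rightarrow)$ need attention.

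On the key lemma $\dc(Q)\geq \dc_X(G)$ for every finite quotient $Q=G/N$ (the paper's Proposition~\ref{prop:rf}): your sandwich $|N\cap \ball_X(n-C)|\leq |\ball_q(n)|\leq |N\cap\ball_X(n+C)|$ is correct, but the assertion that ``subexponentiality of $|N\cap\ball_X(n)|$ forces the ratio $\to 1$ along a density-one set'' is not self-evident. Control of $|N\cap\ball_X(n)|$ requires a genuine equidistribution statement: the paper invokes Burillo--Ventura (Proposition~\ref{prop:bv}), which shows $|gN\cap\ball_X(n)|/|\ball_X(n)|\to 1/[G:N]$ uniformly in $g$, and this is exactly what guarantees $|N\cap\ball_X(n+C)|/|N\cap\ball_X(n-C)|\to 1$ (since $|\ball_X(n+C)|/|\ball_X(n-C)|\to 1$ under the paper's definition of subexponential growth). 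Your density-one/subsequence-extraction gymnastics can be dispensed with once this is cited; without it, there is a gap. As an aside, with the paper's definition ($\lim |\ball_X(n+1)|/|\ball_X(n)|=1$), the ratio $|\ball_X(n)|/|\ball_X(n+C)|\to 1$ along the \emph{full} sequence, so no subsequence extraction is needed there either.

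On part~(i)$(\Rightarrow)$: the version of P.~M.~Neumann's theorem you quote --- ``$\dc(Q)\geq\varepsilon$ implies $Q$ has an abelian subgroup of index $\leq D(\varepsilon)$'' --- is \emph{false}. Extraspecial $2$-groups of order $2^{2n+1}$ have $\dc>1/2$ for all $n$, yet their maximal abelian subgroups have index $2^n\to\infty$. The correct statement (Neumann, 1989) is weaker: there exist normal subgroups $K\leq H\leq Q$ with $[Q:H]\leq c(\varepsilon)$, $|K|\leq c(\varepsilon)$, and $H/K$ abelian. Your subgroup-counting argument ``finitely many subgroups of index $\leq D$, pick a non-commuting pair in each, find $N$ missing all commutators'' therefore does not apply as written. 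One can salvage the approach: since $G$ is finitely generated there are finitely many subgroups of index $\leq c$, so some fixed $H^*\leq G$ of index $\leq c$ serves as $H$ for a cofinal family of quotients; residual finiteness then forces $|[H^*,H^*]|\leq c$, and a finitely generated group with finite derived subgroup has finite-index centre (because each generator has a finite conjugacy class), hence is virtually abelian. But this requires the correct form of Neumann's theorem plus an extra step you did not supply. The paper sidesteps Neumann entirely: it iterates quotients using Gallagher's factorisation $\dc(G)\leq\dc(N)\dc(G/N)$ for finite groups, producing a chain $K_i$ with $\dc(G/K_i)\leq (5/8)^i\to 0$, contradicting $\dc_X(G)>0$ via Proposition~\ref{prop:rf}. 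That route is more elementary and self-contained; yours, once corrected, trades Gallagher for the deeper Neumann theorem and a small extra structural argument.
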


As a corollary we obtain that the positivity of $\dc_X(G)$ is independent from the
generating set $X$.

\begin{cor}\label{cor-main}
Let $G$ be a finitely generated residually finite group of subexponential growth, and
let $X$ and $Y$ be two finite generating sets for $G$. Then, $\dc_X(G)=0 \,
\Leftrightarrow \, \dc_Y(G)=0$.
\end{cor}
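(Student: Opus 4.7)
The plan is to derive Corollary \ref{cor-main} directly from Theorem \ref{thm:main}(i), which characterizes the positivity of $\dc_X(G)$ in purely algebraic, generator-free terms. The hypotheses imposed on $G$ (finite generation, residual finiteness, subexponential growth) are all intrinsic to $G$ and make no reference to a choice of generating set, so the theorem applies equally well with the generating set $X$ or with the generating set $Y$.

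Concretely, I would argue as follows. Suppose $\dc_X(G) = 0$. By the contrapositive of Theorem \ref{thm:main}(i), $G$ is not virtually abelian. Applying Theorem \ref{thm:main}(i) again, this time with the generating set $Y$, we conclude that $\dc_Y(G)$ cannot be positive either, so $\dc_Y(G) = 0$. The reverse implication is identical by symmetry. In short, the conditions $\dc_X(G) = 0$ and $\dc_Y(G) = 0$ are each equivalent to the same algebraic property of $G$, namely failing to be virtually abelian, and hence are equivalent to each other.

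There is no real obstacle here: the entire content of the corollary is already absorbed into Theorem \ref{thm:main}(i). It is nevertheless worth underlining what the corollary does and does not claim. It asserts that the qualitative phenomenon of ``a positive proportion of commuting pairs in large balls'' is an invariant of the group, not of the chosen word metric, at least within the class of finitely generated residually finite groups of subexponential growth. It does \emph{not} assert that the actual numerical value of $\dc_X(G)$, when positive, is independent of $X$; indeed, one would expect that value to depend quite sensitively on the metric, and the corollary makes no statement in that direction.
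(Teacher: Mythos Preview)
Your proposal is correct and matches the paper's approach exactly: the corollary is stated immediately after Theorem~\ref{thm:main} as a direct consequence of part~(i), with no separate proof given, precisely because the condition ``$G$ is virtually abelian'' is generator-independent. Your closing remark about the numerical value of $\dc_X(G)$ also aligns with the paper's own later commentary on this open question.
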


Note that, as a special case, Theorem~\ref{thm:main} and Corollary~\ref{cor-main} both
apply to polynomially growing groups: by Gromov's Theorem~\cite{Gromov-polyn_gr} these
are precisely the virtually nilpotent groups, and it is well known that all of them are
residually finite.


\begin{cor}
Let $G$ be a finitely generated group of polynomial growth, and let $X$ be a finite
generating set for $G$. Then: (i) $\dc_X(G)>0$ if and only if $G$ is virtually abelian; and (ii) $\dc_X(G)>5/8$ if and only if $G$ is abelian.
\end{cor}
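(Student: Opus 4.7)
The plan is simply to verify that every finitely generated group $G$ of polynomial growth satisfies the hypotheses of Theorem~\ref{thm:main}, and then invoke that theorem directly. Two things need to be checked: that $G$ has subexponential growth, and that $G$ is residually finite.

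Subexponential growth is immediate, since polynomial growth is a special case of subexponential growth (for any polynomial $P$ and any $\lambda>1$, one has $P(n)/\lambda^n\to 0$). Residual finiteness is the step that requires an input from outside the paper. I would invoke Gromov's theorem (already cited in the paragraph preceding the corollary) to conclude that $G$ is virtually nilpotent, and then appeal to the classical fact (originally due to Hirsch) that every finitely generated nilpotent group is residually finite. Residual finiteness passes to overgroups of finite index (for instance, by taking the normal core of a finite-index subgroup that avoids a given nontrivial element), so $G$ itself is residually finite.

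With both hypotheses of Theorem~\ref{thm:main} verified for $G$ and any finite generating set $X$, parts (i) and (ii) of that theorem give exactly the statements (i) and (ii) of the corollary. There is essentially no obstacle here: the corollary is a direct specialization, and the only non-tautological ingredient is the appeal to Gromov's theorem together with the well-known residual finiteness of finitely generated virtually nilpotent groups.
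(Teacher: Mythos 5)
Your proof is correct and matches the paper's own reasoning: the paper also deduces the corollary by noting that polynomial growth implies subexponential growth, and then invokes Gromov's theorem together with the residual finiteness of finitely generated virtually nilpotent groups to place $G$ within the scope of Theorem~\ref{thm:main}.
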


We conjecture that same result is true without any hypothesis on the growth of $G$:

\begin{conj}\label{conj}
Let $G$ be a finitely generated group, and let $X$ be a finite generating set for $G$.
Then: (i) $\dc_X(G)>0$ if and only if $G$ is virtually abelian; and (ii) $\dc_X(G)>5/8$ if and only if $G$ is abelian.
\end{conj}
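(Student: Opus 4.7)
The plan is to handle each implication separately and, for the converses, to split according to the growth type of $G$. The forward direction of (i), namely that virtually abelian implies $\dc_X(G)>0$, should hold without any extra hypotheses. Given $H\leq G$ abelian of finite index $d$ with transversal $\{g_1,\ldots,g_d\}$ and $c=\max_i|g_i|_X$, the inclusion $g_i^{-1}\ball_X(n)\subseteq\ball_X(n+c)$ gives $|\ball_X(n)\cap g_iH|=|g_i^{-1}\ball_X(n)\cap H|\le|\ball_X(n+c)\cap H|$; summing over $i$ yields $|\ball_X(n+c)\cap H|\ge|\ball_X(n)|/d$. Virtually abelian groups have polynomial growth, so $|\ball_X(n+c)|/|\ball_X(n)|\to 1$, giving $|\ball_X(n)\cap H|/|\ball_X(n)|\to 1/d$; counting commuting pairs inside $H\cap\ball_X(n)$ then yields $\dc_X(G)\ge 1/d^2>0$. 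The forward direction of (ii) is immediate.

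For the reverse direction of (i) the plan is to split by growth. When $G$ has subexponential growth, the goal is to eliminate the residual finiteness hypothesis from Theorem~\ref{thm:main}. One natural route is to show that $\dc_X(G)>0$ together with subexponential growth already forces virtual nilpotence (and hence residual finiteness, by Mal'cev); a second is to rework the proof of Theorem~\ref{thm:main} using F\o lner sequences, available by amenability of subexponential growth groups, in place of the finite quotients through which Gustafson's Theorem~\ref{thm:5/8} is invoked in the paper.

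The heart of the conjecture, and its main obstacle, is the exponential growth case, where one must show $\dc_X(G)=0$. The starting point is the identity
\[
\dc_X(G)=\limsup_{n\to\infty}\frac{1}{|\ball_X(n)|^2}\sum_{g\in\ball_X(n)}|C_G(g)\cap\ball_X(n)|,
\]
which shows that $\dc_X(G)>\delta$ forces a positive density of elements $g\in\ball_X(n)$ whose centralizer meets $\ball_X(n)$ in a set of density at least $\delta/2$. The plan is to organize these ``fat'' centralizers into a single finite-index abelian subgroup, which would contradict exponential growth. This strategy succeeds for non-elementary hyperbolic groups because centralizers there are quasi-convex and virtually cyclic, but for general exponentially growing groups (CAT(0), mapping class, one-relator, and so on) the structure of centralizers is much less controlled and this is the step where genuinely new ideas seem to be required.

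Finally, once (i) is established, the $5/8$ bound in (ii) should follow by passing to a suitable finite quotient, as in the paper's proof of Theorem~\ref{thm:main}(ii): with $G$ virtually abelian, a normal abelian subgroup $A\trianglelefteq G$ of finite index produces a finite quotient $G/A$ to which Gustafson's Theorem~\ref{thm:5/8} applies, the commuting pairs in $\ball_X(n)$ mapping to commuting pairs in $G/A$.
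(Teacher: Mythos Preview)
The statement you are attempting is Conjecture~\ref{conj}; the paper explicitly labels it a conjecture and does not prove it. There is therefore no paper proof to compare against. Your proposal is not a proof either, and you acknowledge this: for the exponential-growth case you describe the centralizer-density heuristic and then say that beyond the hyperbolic case ``genuinely new ideas seem to be required''; for the subexponential case you suggest two possible routes to remove the residual-finiteness hypothesis from Theorem~\ref{thm:main} but do not carry out either one. The forward implications you do establish coincide with what the paper already proves unconditionally (the ``virtually abelian $\Rightarrow \dc_X(G)>0$'' direction is Proposition~\ref{cor:vab}, and the ``abelian $\Rightarrow \dc_X(G)=1$'' direction is trivial).

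One genuine error in your sketch of (ii): passing to a single quotient $G/A$ with $A$ abelian normal of finite index and applying Gustafson only yields that $G/A$ is abelian, hence that $G$ is metabelian, not that $G$ is abelian. The paper's proof of Theorem~\ref{thm:main}(ii) instead uses residual finiteness to push a hypothetical nontrivial commutator into some finite quotient and derive a contradiction from Gustafson there. Granting (i), a group with $\dc_X(G)>5/8$ would be virtually abelian, hence virtually nilpotent, hence residually finite, and then the paper's argument for Theorem~\ref{thm:main}(ii) applies verbatim; but your one-quotient version does not suffice.
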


In view of Theorem~\ref{thm:main} (and since virtually abelian groups are
polynomially growing), one could prove Conjecture~\ref{conj} by showing that exponentially growing groups, and non-residually finite sub-exponentially growing groups $G$ all satisfy $\dc_X(G)=0$, for every finite generating set $X$. We have not been able to see this, but we can show the following particular (but interesting and significant) case:

\begin{thm}\label{thm:nafs}
Let $G$ be a  non-elementary hyperbolic group, and let $X$ be a finite generating set
for $G$. Then $\dc_X(G)=0$.
\end{thm}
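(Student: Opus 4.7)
The plan is to bound the number of commuting pairs in $\ball_X(n)^{\,2}$ by playing the thinness of centralizers in a non-elementary hyperbolic group $G$ against the exponential growth of its balls. Writing
\[
\bigl|\{(u,v)\in \ball_X(n)^{\,2}\,:\,uv=vu\}\bigr|\;=\;\sum_{u\in \ball_X(n)}\bigl|C_G(u)\cap \ball_X(n)\bigr|,
\]
I would separate off the contribution of $u=1$ (which is exactly $|\ball_X(n)|$) from the rest, and aim for a uniform linear bound $|C_G(u)\cap \ball_X(n)|\le Cn$ valid for every nontrivial $u$, with a constant $C=C(G,X)$ independent of $u$.

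Such a uniform bound follows from three standard facts about non-elementary hyperbolic groups: (i) the centralizer of every nontrivial element is elementary, i.e.\ virtually cyclic; (ii) the orders of the finite subgroups of $G$ are bounded by some $N=N(G)$; and (iii) there is a constant $\tau_0=\tau_0(G,X)>0$ such that $|g^k|_X\ge \tau_0|k|-C'$ for every infinite-order $g\in G$ and every $k\in\Z$. If $C_G(u)$ is finite, (ii) gives $|C_G(u)\cap \ball_X(n)|\le N$; otherwise $C_G(u)$ is an extension of an infinite cyclic group $\langle c\rangle$ by a group of order at most $N$, and (iii) bounds the number of powers of $c$ inside $\ball_X(n)$ by $O(n)$, whence $|C_G(u)\cap \ball_X(n)|\le Cn$ uniformly.

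Combining the two steps, the number of commuting pairs in $\ball_X(n)^{\,2}$ is at most $|\ball_X(n)|(1+Cn)$, and therefore
\[
\dc_X(G)\;\le\;\limsup_{n\to\infty}\frac{1+Cn}{|\ball_X(n)|}.
\]
Since $G$ is non-elementary, Gromov's construction of non-abelian free subgroups forces $|\ball_X(n)|$ to grow exponentially, so this limit is $0$. The main obstacle is simply assembling the right black boxes (elementarity of centralizers, uniformly bounded finite subgroups, and uniform positivity of stable translation lengths) from the standard references on hyperbolic groups; no new geometric input is needed, and the argument reduces to the counting estimate above once these ingredients are in hand.
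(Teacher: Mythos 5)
The counting skeleton you set up — bounding commuting pairs by $\sum_{u\in\ball_X(n)}|C_G(u)\cap\ball_X(n)|$ and playing thin centralizers against exponential volume growth — is exactly the paper's strategy (their Lemma~4.1). But your step (i), that \emph{every} nontrivial element of a non-elementary hyperbolic group has elementary (virtually cyclic) centralizer, is false, and the proof does not survive this error. Only infinite-order elements are guaranteed to have virtually cyclic centralizers. A torsion element can have a huge centralizer: for instance, in any hyperbolic group with nontrivial (necessarily finite) center, such as $\mathrm{SL}_2(\Z)$ with center $\{\pm I\}$, the central involution has $C_G(-I)=G$, so $|C_G(-I)\cap\ball_X(n)|=|\ball_X(n)|$ is not $O(n)$. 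Consequently your intended uniform bound $|C_G(u)\cap\ball_X(n)|\le Cn$ for all $u\ne 1$ fails, and separating off only $u=1$ is not enough.

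The fix, which is precisely what the paper does, is to separate off the entire set $\cN$ of torsion elements rather than just the identity, prove the uniform $O(n)$ centralizer bound only for $u\notin\cN$ (where your facts (i)--(iii) are actually valid), and then show that $\cN$ itself contributes negligibly, i.e.\ $|\cN\cap\ball_X(n)|/|\ball_X(n)|\to 0$. This last step is not free: the paper invokes a result of Dani giving $|\cN\cap\ball_X(n)|\lesssim |\ball_X(\lceil n/2\rceil + D)|$, which combined with exponential growth yields negligibility. Without some such estimate on the density of torsion, your argument has a genuine hole; the bounded-order-of-finite-subgroups fact alone does not bound how many torsion elements (which are conjugates, by arbitrarily long elements, into finitely many finite subgroups) can lie in a ball.
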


\begin{rem}
All along the paper, all generating sets $X$ are assumed to generate $G$ as a group.
The same results remain true if one assumes that they generate $G$ as a monoid, modulo
changing the radii of some balls in the proof of \ref{thm:nafs}.
\end{rem}

\begin{rem}
There is a technical detail which is not yet completely clarified: is the limsup in the
definition of $\dc_X(G)$ always a real limit? Since they all are sequences of
nonnegative real numbers, whenever these limsups are equal to zero they automatically
are genuine limits (and this is the case for a vast majority of groups, as can be
deduced from Theorems~\ref{thm:main} and~\ref{thm:nafs}). However, further than that we
have not been able to prove that these limsups are always genuine limits, neither have
been able to construct an example where the limit does not exist. As far as we know,
this is an intriguing open question at the moment.
\end{rem}

\begin{rem}
Another intriguing detail is the dependency, or not, of the actual value of $\dc_X(G)$
from the set of generators. Corollary~\ref{cor-main} states that, under hypothesis on
$G$, the positivity of $\dc_X(G)$ does not depend on $X$. And if Conjecture~\ref{conj}
is true then the same will happen without any assumption on $G$. However, in the positive
realm, we have not been able to prove that the value of $\dc_X(G)$ is always
independent from $X$, neither have been able to construct a group $G$ with two finite
generating sets $X$ and $Y$ such that $\dc_X(G)\neq \dc_Y(G)$. As far as we know, this
is another intriguing open question at the moment.
\end{rem}

The generalization of the above questions to the general values $\ds(G,\cE, \{\mu_n
\}_{n\in \N})$ is not interesting: by constructing artificially enough probability measures $\{\mu_n\}_{n\in \N}$, concentrating their masses in nonsignificant parts of $G$, one could force the sequence in $\ds(G,\cE,\{\mu_n\}_{n\in \N})$ to be not convergent, and even to force $\ds(G,\cE,\{\mu_n\}_{n\in \N})$ to equal any real number in $[0,1]$. For example, take a group $G=\langle X\rangle$ with $\dc_X(G)=0$, fix an element $g\in G$ of infinite order, fix a non decreasing sequence $M_n$ of natural numbers, and consider $\mu_n$ to be the uniform distribution on the finite set $\supp{\mu_n }=\ball_X(n)\cup \{ g^r : |r|\leqslant M_n\}$. Taking $M_n=0$ for every $n$, we reproduce the fact $\dc_X(G)=0$, while taking the appropriate $M_n$'s we can force the limit $\dc(G,\{\mu_n\}_{n\in \N})$ to equal to any prefixed $s\in [0,1]$ (since all the powers of $g$ commute to each other); or even not to exist.

The paper is structured as follows. In Section~\ref{2} we consider polynomially growing
groups and prove several preliminary results in this case. In Section~\ref{3}, we
consider the sub-exponential case and prove our main result, Theorem~\ref{thm:main}.
Finally, in Section~\ref{4}, we look at the exponential case and prove
Theorem~\ref{thm:nafs} as a partial result in the direction of Conjecture~\ref{conj}.

\section{Groups of polynomial growth}\label{2}

Let $G$ be a finitely generated group, and $X$ a finite generating set for $G$.

The \emph{growth of $G$ with respect to $X$} is the non-decreasing function measuring
the size of the $X$-balls, $\gamma_X\colon \N\to \N$, $\gamma_X(n)=|\ball_X(n)|$. As it
is well known, this function depends on $X$, but its asymptotic growth, i.e., its
equivalence class modulo the following equivalence relation does not: two functions
$f,g\colon\N\to \N$ are \emph{equivalent} if and only if $f\preceq g\preceq f$, where
$f\preceq g$ if and only if there exist constants $C$ and $D$ such that $f(n)\leqslant
Cg(Dn)$ for all $n\geqslant 1$ (or, equivalently, for $n\gg 0$).

A group $G$ is said to have \emph{subexponential growth} if $\lim_{n\to \infty}
|\ball_X(n+1)|/|\ball_X(n)|=1$. And it is said to have \emph{polynomial growth} if
there exist constants $D$ and $d$ such that $|\ball_X(n)|\leq Dn^d$ for $n\gg 0$. It is
well known that being of subexponential/polynomial growth is independent from the
choice of generating set $X$. The celebrated Gromov's Theorem~\cite{Gromov-polyn_gr}
says that $G$ has polynomial growth if and only if it is virtually nilpotent. And,
precisely for (virtually) nilpotent groups $G$, a previous well-known result due to
Bass~\cite{Bass} states the existence of constants $C$ and $D$ such that $Cn^d\leqslant
|\ball_X(n)|\leqslant Dn^d$ for all $n\geqslant 1$, where $d$ is a natural number
depending only on $G$ (more precisely, $d=\sum_{i\geqslant 1} ir_i$, where $r_i$ is the
torsion-free rank of $G_i/G_{i+1}$ and $G=G_1\geqslant G_2 \geqslant \cdots \geqslant
1$ is the (finite) lower central series of $G$); hence, for polynomially growing groups
$G$, its growth function is exactly polynomial of a well defined degree $d$, up to
multiplicative constants. We then say that $G$ has polynomial growth \emph{of degree
$d$}. Finally, Grigorchuk found in~\cite{Gr} the first examples of groups of
\emph{intermediate growth}, i.e., groups $G$ of subexponential but not polynomial
growth.

A function $f\colon G\to \N$ is an \emph{estimate of the $X$-metric} if there exists a
constant $K>0$ such that
 \begin{equation}\label{estimate}
\frac{1}{K}f(u)\leqslant |u|_X \leqslant Kf(u),
 \end{equation}
for all $u\in G$. Here are two important examples of this notion: (i) if $Y$ is another
finite generating set for $G$, then $|\cdot|_Y$ is an estimate of the $X$-metric (and
$|\cdot|_X$ is an estimate of the $Y$-metric); and (ii) if $\langle Y\rangle=H\leqslant
G=\langle X\rangle$ is a non-distorted subgroup (i.e., there exists a constant $K>0$
such that, for every $h\in H$, $|h|_Y/K\leqslant |h|_X \leqslant K|h|_Y$), then
$|\cdot|_X$ restricted to $H$ is an estimate of the $Y$-metric for $H$.

For any function $f\colon G\to \N$, we define the \emph{$f$-ball of radius $n$} as the
pre-image under $f$ of $\{0,1,\ldots, n\}$, and we denote it by $\ball_f(n)$. Of
course, we have $\ball_f(0)\subseteq \ball_f(1)\subseteq \cdots$ and $G=\cup_{n\in \N}
\ball_f(n)$. Note also that, if $f$ is an estimation of the $X$-metric then, in
particular, $f$-balls are all  finite and almost all non-empty. In this case,
just as $\dc_X(G)$ denotes $\dc(G,\{\mu_n\})$, where $\mu_n$ is the uniform measure on
$\ball_X(n)$, so we write $\dc_f(G)$ to denote $\dc(G,\{\nu_n\})$, where $\nu_n$ is the
uniform measure on $\ball_f(n)$; that is,
 $$
\dc_f(G) =\limsup_{n\to \infty} \frac{|\{(u,v)\in (\ball_f(n))^2 \ : \ uv=vu \}|}{|
\ball_f(n)|^2}.
 $$
We have to think of $\nu_n$ as a small perturbation of $\mu_n$, and
equation~\eqref{estimate} will allow us to relate $\dc_X(G)$ with $\dc_f(G)$.

\begin{prop}\label{prop:est}
Let $G$ be a finitely generated polynomially growing group, and let $X$ be a finite
generating set for $G$. Let $f\colon G\to \N$ be an estimate of the $X$-metric. Then,
 $$
\dc_X(G)>0 \quad \Longleftrightarrow \quad \dc_f(G)>0.
 $$
\end{prop}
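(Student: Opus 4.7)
The plan is to exploit the estimate~(\ref{estimate}) to bound the $f$-balls between nested $X$-balls, and then use Bass's polynomial two-sided bound to control the ratio of volumes between the outer and inner $X$-balls by a constant. Since polynomial growth is preserved under such rescalings, this lets us transfer positivity of $\dc$ in both directions.

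First I would observe that the estimate condition $\frac{1}{K}f(u)\leqslant |u|_X\leqslant Kf(u)$ immediately gives the sandwich
\[
\ball_X(n/K) \;\subseteq\; \ball_f(n) \;\subseteq\; \ball_X(Kn),
\]
for every $n\geqslant 1$ (with a harmless integer-part adjustment). Indeed, $f(u)\leqslant n$ forces $|u|_X\leqslant Kn$, while $|u|_X\leqslant n/K$ forces $f(u)\leqslant n$. This inclusion is the only geometric input I need.

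Next I invoke Bass's theorem: there exist constants $C,D>0$ and an integer $d\geqslant 0$ with $Cn^d\leqslant |\ball_X(n)|\leqslant Dn^d$ for $n\gg 0$. Combined with the sandwich, this gives $C'n^d\leqslant |\ball_f(n)|\leqslant D'n^d$ for suitable $C',D'>0$, and in particular the crucial bounded ratios
\[
\frac{|\ball_X(Kn)|}{|\ball_X(n/K)|}\;\leqslant\; \frac{D\,(Kn)^d}{C\,(n/K)^d}\;=\;\frac{D}{C}K^{2d},
\]
and the analogous inequality with the roles of $f$ and $|\cdot|_X$ swapped. Now for the forward direction, suppose $\dc_X(G)>0$, so there is $\varepsilon>0$ and a subsequence $n_k\to\infty$ with $c_X(n_k)\geqslant \varepsilon\,|\ball_X(n_k)|^2$, writing $c_X(n)$ for the number of commuting pairs in $\ball_X(n)^2$. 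Set $m_k=\lceil Kn_k\rceil$. Since $\ball_X(n_k)\subseteq \ball_f(m_k)\subseteq \ball_X(K m_k)$, every commuting pair in $\ball_X(n_k)$ is a commuting pair in $\ball_f(m_k)$, and so
\[
\frac{c_f(m_k)}{|\ball_f(m_k)|^2} \;\geqslant\; \frac{c_X(n_k)}{|\ball_X(K m_k)|^2} \;\geqslant\; \varepsilon\cdot\left(\frac{|\ball_X(n_k)|}{|\ball_X(K m_k)|}\right)^{\!2}\!,
\]
and the latter ratio is bounded below by a positive constant depending only on $C,D,K,d$. Taking the limsup yields $\dc_f(G)>0$. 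The converse direction is entirely symmetric: the hypothesis on $f$ is visibly symmetric in $f$ and $|\cdot|_X$, and $|\cdot|_X$ is trivially an estimate of itself, so the same argument with $f$ and $|\cdot|_X$ interchanged gives $\dc_X(G)>0$ from $\dc_f(G)>0$.

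There is no real obstacle here; the content is the observation that polynomial growth guarantees the ratio $|\ball_X(Kn)|/|\ball_X(n/K)|$ stays bounded, which is exactly what turns a qualitative inclusion of balls into a quantitative comparison of degrees of commutativity. The same strategy would fail at an exponential growth rate, which is precisely why this proposition is stated under the polynomial growth hypothesis.
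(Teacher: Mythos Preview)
Your proof is correct and follows essentially the same approach as the paper: both use the sandwich $\ball_X(\lfloor n/K\rfloor)\subseteq \ball_f(n)\subseteq \ball_X(Kn)$ together with Bass's two-sided polynomial bound to show that the volume ratios between inner and outer balls are bounded, and then transfer positivity of the degree of commutativity. The only cosmetic difference is that the paper writes the inequality for all $n$ and passes to the $\limsup$ directly (obtaining $\dc_f(G)\cdot\varepsilon\leqslant \dc_X(G)$), whereas you extract an explicit subsequence first; the content is the same.
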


\begin{proof}
Take constants $C,D,K>0$ such that $Cn^d\leqslant |\ball_X(n)|\leqslant Dn^d$ for all
$n\geqslant 1$ (where $d\geq 1$ is the growth degree of $G$), and $f(g)/K\leqslant
|g|_X \leqslant Kf(g)$ for all $g\in G$. Then,
 $$
\ball_X(\lfloor n/K\rfloor)\subseteq \ball_f(n)\subseteq \ball_X(Kn),
 $$
and so,
 $$
|\{(u, v) \in (\ball_f(n))^2 \ : \  uv = vu\}|\leq |\{(u, v) \in (\ball_X(Kn))^2 \ : \  uv =
vu\}|.
 $$
Then, for all $n\geqslant 0$ we have that
\begin{align}\label{eq:f-est}
\frac{|\{(u, v) \in (\ball_f(n))^2 \ : \  uv = vu\}|}{|\ball_f(n)|^2}
\frac{|\ball_f(n)|^2}{|\ball_X(Kn)|^2} & =\frac{|\{(u, v) \in (\ball_f(n))^2 \ : \  uv=vu
\}|}{|\ball_X(Kn)|^2} \nonumber \\ & \leqslant \frac{|\{(u, v) \in  (\ball_X(Kn))^2 \ : \
uv=vu \}|}{|\ball_X(Kn)|^2}.
\end{align}
Also, for $n\gg 0$,
 $$
\frac{|\ball_f(n)|}{|\ball_X(Kn)|} \geqslant \frac{|\ball_X (\lfloor n/K\rfloor
)|}{|\ball_X(Kn)|} \geqslant \frac{C \lfloor n/K \rfloor^d}{D(Kn)^d}\geqslant
\frac{C(2n/(3K))^d}{DK^dn^d}=\frac{2^dC}{3^dDK^{2d}}>0,
 $$
where the last inequality works because $x\geqslant 3\Rightarrow \lfloor x\rfloor
\geqslant 2x/3$.

Now, taking $\limsup$ in~\eqref{eq:f-est}, we obtain that $\dc_f(G)\cdot \e \leqslant
\dc_X(G)$, for some $\e>0$. Hence, $\dc_X(G)=0$ implies $\dc_f(G)=0$.

A symmetric computation yields the other implication.
\end{proof}

\begin{cor}
Let $G$ be a finitely generated polynomially growing group. The positivity (or nullity) of the value $\dc_X(G)$ does not depend on the finite generating set $X$ for $G$. $\Box$
\end{cor}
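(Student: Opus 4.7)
The plan is to deduce the corollary as a direct consequence of Proposition~\ref{prop:est}, by taking the estimate $f$ to be the word length with respect to the second generating set. Since $G$ is finitely generated and both $X$ and $Y$ are finite generating sets, any element of $Y$ can be expressed as an $X$-word of bounded length and vice versa, so the two word metrics are bi-Lipschitz equivalent. This is precisely example (i) in the discussion preceding Proposition~\ref{prop:est}: the function $f \colon G \to \N$ defined by $f(g) = |g|_Y$ satisfies $f(u)/K \leqslant |u|_X \leqslant K f(u)$ for some constant $K>0$, so $f$ is an estimate of the $X$-metric.

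With this choice of $f$, the $f$-ball of radius $n$ is by definition $\ball_f(n) = \{g \in G : |g|_Y \leqslant n\} = \ball_Y(n)$. Hence the uniform measure on $\ball_f(n)$ coincides with the uniform measure on $\ball_Y(n)$, and therefore $\dc_f(G) = \dc_Y(G)$.

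Now I would apply Proposition~\ref{prop:est}, which requires $G$ to be finitely generated and polynomially growing (both hypotheses are in force here). The proposition gives
\[
\dc_X(G) > 0 \ \Longleftrightarrow \ \dc_f(G) > 0,
\]
and combining with $\dc_f(G) = \dc_Y(G)$ we obtain $\dc_X(G) > 0 \Leftrightarrow \dc_Y(G) > 0$, which is the desired symmetric statement (the nullity assertion is just the contrapositive).

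There is no real obstacle here, as this is simply a matter of unwinding definitions once the heavy lifting has been done in Proposition~\ref{prop:est}. The only point worth noting is that the polynomial growth hypothesis enters only through that proposition (via Bass's two-sided polynomial estimate $Cn^d \leqslant |\ball_X(n)| \leqslant Dn^d$), and not in any additional way in this corollary.
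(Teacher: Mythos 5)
Your argument is exactly the intended one: the paper marks this corollary with a $\Box$ because it is immediate from Proposition~\ref{prop:est} once one takes $f = |\cdot|_Y$, observes (as in example (i) preceding the proposition) that this is an estimate of the $X$-metric, and notes that $\ball_f(n)=\ball_Y(n)$ so $\dc_f(G)=\dc_Y(G)$. Your proposal is correct and follows the same route.
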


Now consider a group $G=\langle X\rangle$ and a non-distorted subgroup $\langle Y\rangle=H\leqslant G$ (this is always the case, for example, when $H$ is of finite index in $G$). As noted above, the function $f\colon H\to \N$ given by $f(h)=|h|_X$ (i.e., the metric on $H$ induced by the $X$-metric on $G$) is an estimate of the $Y$-metric in $H$. Hence, denoting the number $\dc_f(H)$ by $\dc_X(H)$, Proposition~\ref{prop:est} tells us that

\begin{cor}\label{rem:subest}
Let $G$ be a finitely generated polynomially growing group, let $H\leqslant G$ be a non-distorted subgroup, and take finite generating sets $G=\langle X\rangle$ and $H=\langle Y\rangle$. Then,
 $$
\dc_X(H)>0 \quad \Longleftrightarrow \quad \dc_Y(H)>0.
 $$
\end{cor}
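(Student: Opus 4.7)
The plan is to reduce Corollary \ref{rem:subest} to a direct application of Proposition \ref{prop:est}, which was proved for \emph{any} estimate of the generating metric of a polynomially growing group. So the work consists in verifying that the hypotheses of that proposition can be set up for the subgroup $H$ with respect to its own generating set $Y$, and that the function $f(h)=|h|_X$ plays the role of such an estimate.

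First I would check that $H$ itself satisfies the hypotheses of Proposition \ref{prop:est}. It is finitely generated by assumption ($H=\langle Y\rangle$). Since every element of $Y$ has bounded $X$-length, there is a constant $C>0$ with $\ball_Y(n)\subseteq \ball_X(Cn)$ for all $n\geqslant 1$; combined with the polynomial growth bound $|\ball_X(Cn)|\leqslant D(Cn)^d$ inherited from $G$, this gives $|\ball_Y(n)|\leqslant D'n^d$, so $H$ has polynomial growth with respect to $Y$. Hence Proposition \ref{prop:est} is available for the group $H$ with generating set $Y$.

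Next I would invoke the observation recorded just before the statement: since $H\leqslant G$ is non-distorted, the restriction of the $X$-metric to $H$ is an estimate of the $Y$-metric, i.e., the function $f\colon H\to\N$ defined by $f(h)=|h|_X$ satisfies an inequality of the form $f(h)/K\leqslant |h|_Y\leqslant Kf(h)$ for some $K>0$ and all $h\in H$. Thus $f$ qualifies as an estimate of the $Y$-metric on $H$ in the sense of~\eqref{estimate}.

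With both pieces in place, Proposition \ref{prop:est} applied to the polynomially growing group $H=\langle Y\rangle$ and to the estimate $f$ yields
$$
\dc_Y(H)>0 \quad\Longleftrightarrow\quad \dc_f(H)>0.
$$
But $\dc_X(H)$ is defined to be precisely $\dc_f(H)$, so this is exactly the claimed equivalence. There is no real obstacle here: once $H$ is confirmed to be polynomially growing and once the non-distortion hypothesis is translated into the estimate condition, the corollary is immediate from the previous proposition. The only thing to be careful about is not confusing balls in $H$ measured by $Y$ with the sets $\ball_X(n)\cap H$, which in general differ even though the two metrics are comparable; this is exactly what the $f$-ball formalism is designed to absorb.
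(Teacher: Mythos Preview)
Your proof is correct and follows essentially the same route as the paper: the paper simply notes that $f(h)=|h|_X$ is an estimate of the $Y$-metric on $H$ and invokes Proposition~\ref{prop:est}. You add the verification that $H$ itself has polynomial growth (needed to legitimately apply Proposition~\ref{prop:est} to $H$ with generating set $Y$), a detail the paper leaves implicit.
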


Using the following result from Burillo--Ventura (see~\cite[Propositions 2.3 and
2.4]{BV}) we can deduce an inequality about finite index subgroups in polynomially
growing groups, which will be useful later.

\begin{prop}[Burillo--Ventura~\cite{BV}]\label{prop:bv}
Let $G$ be a finitely generated group with subexponential growth, and let $X$ be a
finite generating set for $G$. For every finite index subgroup $H\leqslant G$ and every
$g\in G$, we have
 $$
\lim_{n\to \infty} \frac{|gH\cap \ball_X(n)|}{|\ball_X(n)|}=\lim_{n\to \infty} \frac{|Hg\cap
\ball_X(n)|}{|\ball_X(n)|}= \frac{1}{[G:H]}.
 $$
\end{prop}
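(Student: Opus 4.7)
The plan is to exploit the fact that multiplying by a fixed element distorts the $X$-metric by at most a bounded amount, and then squeeze using the hypothesis on subexponential growth. I will handle the left-coset case; the right-coset case is entirely symmetric.

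Let $d=[G:H]$ and fix coset representatives $g_1,\dots,g_d\in G$, so that $G=\bigsqcup_{i=1}^d g_iH$. Set $K=\max_{i,j}|g_ig_j^{-1}|_X$, a finite constant depending only on $H$ and $X$. The key observation is that left multiplication by $g_ig_j^{-1}$ sends $g_jH$ bijectively onto $g_iH$, and shifts the $X$-length of any element by at most $K$; in particular, it sends $g_jH\cap \ball_X(n)$ injectively into $g_iH\cap \ball_X(n+K)$. Therefore $|g_jH\cap \ball_X(n)|\leqslant |g_iH\cap\ball_X(n+K)|$ for all $i,j$ and all $n$.

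Summing over $i$ and using $|\ball_X(n+K)|=\sum_i |g_iH\cap \ball_X(n+K)|$, this gives the upper bound
$$
|g_jH\cap \ball_X(n)|\;\leqslant\;\tfrac{1}{d}\,|\ball_X(n+K)|.
$$
A symmetric argument (bounding each $|g_iH\cap \ball_X(n)|$ above by $|g_jH\cap \ball_X(n+K)|$ and summing over $i$) yields $|\ball_X(n)|\leqslant d\cdot |g_jH\cap\ball_X(n+K)|$; replacing $n$ by $n-K$ (valid for $n\geqslant K$) gives the lower bound $|g_jH\cap \ball_X(n)|\geqslant \tfrac{1}{d}\,|\ball_X(n-K)|$. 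Combining:
$$
\frac{1}{d}\cdot\frac{|\ball_X(n-K)|}{|\ball_X(n)|}\;\leqslant\;\frac{|g_jH\cap \ball_X(n)|}{|\ball_X(n)|}\;\leqslant\;\frac{1}{d}\cdot\frac{|\ball_X(n+K)|}{|\ball_X(n)|}.
$$

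To conclude, I need both outer ratios to tend to $1$. This is where the subexponential growth hypothesis enters: by definition $|\ball_X(n+1)|/|\ball_X(n)|\to 1$, and iterating $K$ times (writing each ratio $|\ball_X(n+K)|/|\ball_X(n)|$ as a telescoping product of $K$ consecutive ratios, each of which tends to $1$) one gets $|\ball_X(n+K)|/|\ball_X(n)|\to 1$, and analogously $|\ball_X(n-K)|/|\ball_X(n)|\to 1$. The squeeze forces the middle quantity to converge to $1/d$. Finally, $gH$ equals one of the cosets $g_iH$ (so the same limit applies), completing the left-coset statement. For $Hg$, repeat the argument with right multiplication by $g_j^{-1}g_i$, which again shifts $X$-length by a bounded amount. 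The only nontrivial step is the passage $|\ball_X(n+1)|/|\ball_X(n)|\to 1\;\Rightarrow\;|\ball_X(n+K)|/|\ball_X(n)|\to 1$, and this is immediate from the multiplicative telescoping, so I do not anticipate a real obstacle here.
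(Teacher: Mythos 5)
Your proof is correct. Note, however, that the paper does not actually prove this statement; it simply cites \cite[Propositions 2.3 and 2.4]{BV} and uses the result as a black box. So strictly speaking there is no "paper's own proof" to compare against. Your argument supplies a clean, self-contained replacement, and it follows the approach one would expect (and which, as far as I can tell, is essentially the Burillo--Ventura argument): pick coset representatives, translate one coset onto another by a fixed element of bounded length $K$, obtain the two-sided estimate $\tfrac{1}{d}|\ball_X(n-K)|\leqslant |g_jH\cap\ball_X(n)|\leqslant \tfrac{1}{d}|\ball_X(n+K)|$, and squeeze using the fact that $|\ball_X(n+1)|/|\ball_X(n)|\to 1$ forces the fixed-offset ratios $|\ball_X(n\pm K)|/|\ball_X(n)|$ to $1$ by a finite telescoping product. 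All the individual steps check out: the multiplication maps are bijections between cosets, they move $X$-length by at most $K$, and summing over the $d$ cosets converts the single-coset comparison into a bound against the full ball. The right-coset case really is symmetric, as you say. One stylistic remark: the paper's definition of subexponential growth is exactly $\lim_n |\ball_X(n+1)|/|\ball_X(n)|=1$, which is precisely what you use, so your appeal to it is legitimate with no hidden gap in the telescoping step.
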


\begin{rem}\label{rem:freegp}
We recall the observation from~\cite[Example~1.5]{BV} saying that, in
Proposition~\ref{prop:bv}, the subexponential growth condition is necessary. If
$\mathbb{F}_p$ is the free group in $p$ generators, $X$ is a free basis, and $N$ is the
subgroup of index two consisting of the set of elements of even length, it is easy to
see that $\limsup_{n\to \infty} |N\cap \ball_X(n)|/|\ball_X(n)|
=(2p-2)(2p-1)/((2p-1)^2-1)$. This value is different from $1/2$ and, even worse, the
limit does not exists (and, probably, different values can be obtained working with
other sets of generators). This easy example illustrates the extra difficulties of the
exponentially growing case.
\end{rem}

\begin{prop}\label{cor:vab}
Let $G$ be a finitely generated polynomially growing group, $H\leqslant G$ a finite
index subgroup, and take finite generating sets $G=\langle X\rangle$ and $H=\langle
Y\rangle$. Then,
 $$
\dc_X(G)\geq \dfrac{\dc_X(H)}{[G:H]^2}.
 $$
In particular,  $\dc_Y(H)>0$ implies $\dc_X(G)>0$.
\end{prop}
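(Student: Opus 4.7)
The plan is to exploit the obvious inclusion: every pair $(u,v)\in H\times H$ that commutes in $H$ is also a commuting pair in $G$, so the count of commuting pairs in $\ball_X(n)^2$ is bounded below by the count of commuting pairs in $(H\cap \ball_X(n))^2$. The ratio $|H\cap \ball_X(n)|/|\ball_X(n)|$ will be controlled by Proposition~\ref{prop:bv}, and the commutativity ratio inside $H$ will be recognized as the quantity defining $\dc_X(H)$.

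More concretely, I would first observe the elementary pointwise inequality, valid for all $n$,
$$
\frac{|\{(u,v)\in \ball_X(n)^{\,2} : uv=vu\}|}{|\ball_X(n)|^2}
\;\geqslant\;
\frac{|\{(u,v)\in (H\cap \ball_X(n))^{\,2} : uv=vu\}|}{|H\cap \ball_X(n)|^2}
\cdot \left(\frac{|H\cap \ball_X(n)|}{|\ball_X(n)|}\right)^{\!2},
$$
where I have simply rewritten the right-hand numerator by inserting a factor of $|H\cap \ball_X(n)|^2$ in the denominator and compensating. Here $H\cap \ball_X(n)=\ball_f(n)$, where $f\colon H\to \N$ is the estimate of the $Y$-metric given by $f(h)=|h|_X$ (finite-index subgroups are non-distorted), so the first factor on the right is exactly the sequence whose limsup defines $\dc_X(H)=\dc_f(H)$.

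Next, I would take $\limsup$ as $n\to\infty$. Since $G$ has polynomial (hence sub\-exponential) growth, Proposition~\ref{prop:bv} gives
$$
\lim_{n\to\infty}\frac{|H\cap \ball_X(n)|}{|\ball_X(n)|}=\frac{1}{[G:H]},
$$
so the second factor on the right converges to $1/[G:H]^2$. Because $\limsup(a_nb_n)=(\limsup a_n)\cdot \lim b_n$ whenever the $b_n$ converge to a positive limit, taking $\limsup$ of both sides yields
$$
\dc_X(G)\;\geqslant\;\dc_X(H)\cdot \frac{1}{[G:H]^2},
$$
which is the claimed inequality. The ``in particular'' statement then follows immediately: finite-index subgroups are non-distorted, so Corollary~\ref{rem:subest} gives $\dc_Y(H)>0 \Longleftrightarrow \dc_X(H)>0$, and combined with the inequality just proved, this forces $\dc_X(G)>0$.

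The only delicate point is the interplay between $\limsup$ on the left and $\limsup$ on the right: one needs that the ``mass of $H$ in $\ball_X(n)$'' converges (not merely limsups) to a positive value, and this is exactly what Proposition~\ref{prop:bv} guarantees in the subexponential setting. Remark~\ref{rem:freegp} already warned us this is where the hypothesis on growth is essential; without it (e.g.\ for free groups) the ratio $|H\cap \ball_X(n)|/|\ball_X(n)|$ need not even converge, and the argument would break down.
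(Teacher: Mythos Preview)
Your proof is correct and follows essentially the same route as the paper: both start from the pointwise inequality obtained by restricting commuting pairs to $(H\cap\ball_X(n))^2$, then invoke Proposition~\ref{prop:bv} to control $|H\cap\ball_X(n)|/|\ball_X(n)|$, and finish with Corollary~\ref{rem:subest} for the ``in particular'' clause. The only cosmetic difference is that the paper runs an explicit $\varepsilon$-argument (bounding the ratio below by $1/[G:H]-\varepsilon$ and letting $\varepsilon\to 0$), whereas you invoke directly the identity $\limsup(a_nb_n)=(\limsup a_n)\cdot\lim b_n$ for $b_n$ convergent to a positive limit; these are equivalent formulations of the same step.
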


\begin{proof}
Notice that $|\{(u,v)\in (\ball_X(n))^2\ : \  uv=vu\}|\geq |\{(u,v)\in
(H\cap\ball_X(n))^2\ : \  uv=vu\}|$ and therefore
\begin{equation}\label{eq:fin1}
\dfrac{|\{(u,v)\in (\ball_X(n))^2\ : \  uv=vu\} |}{|\ball_X(n)|^2} \geq
\dfrac{|\{(u,v)\in (H\cap\ball_X(n))^2\ : \  uv=vu\}|}{|H\cap \ball_X(n)|^2} \cdot
\dfrac{|H\cap \ball_X(n)|^2}{|\ball_X(n)|^2}.
\end{equation}
Now, given $\e>0$, Proposition~\ref{prop:bv} tells us that
 $$
\frac{|H\cap \ball_X(n)|}{|\ball_X(n)|}\geq \frac{1}{[G:H]}-\e,
 $$
for $n\gg 0$; therefore, from~\eqref{eq:fin1}, we deduce that
 $$
\dfrac{|\{(u,v)\in (\ball_X(n))^2\ : \  uv=vu\} |}{|\ball_X(n)|^2}\geq \dfrac{|\{(u,v)\in
(H\cap\ball_X(n))^2\ : \  uv=vu\}|}{|H\cap \ball_X(n)|^2} \left(\dfrac{1}{[G:H]}-\e\right)^2,
 $$
for $n\gg 0$. Taking $\limsup$, we obtain $\dc_X(G)\geqslant \dc_X(H)(1/[G:H]-\e)^2$.
Since this is true for every $\e>0$, we deduce the desired inequality
$\dc_X(G)\geqslant \dc_X(H)/[G:H]^2$.

Hence, by Corollary~\ref{rem:subest}, $\dc_Y(H)>0 \, \Rightarrow \, \dc_X(H)>0 \,
\Rightarrow \, \dc_X(G)>0$.
\end{proof}

\section{Groups of subexponential growth and the proof of Theorem~\ref{thm:main}}\label{3}

In the finite realm, the degree of commutativity behaves well with respect to normal
subgroups and quotients. The first statement in this direction is the following one due
to Gallagher (and meaning that a group $G$ is \emph{at most} as abelian as any normal
subgroup and any quotient of itself).

\begin{lemma}[Gallagher, \cite{Ga}]\label{lem:formula-finite}
Let $G$ be a finite group, and $N\unlhd G$ a normal subgroup. Then,
 $$
\dc(G)\leqslant \dc(N)\cdot \dc(G/N).
 $$
\end{lemma}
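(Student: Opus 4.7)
The plan is to reduce the desired inequality to the classical combinatorial bound $k(G) \leqslant k(N) \cdot k(G/N)$ on the numbers of conjugacy classes. Indeed, the familiar identity $|\{(u,v) \in G^2 : uv = vu\}| = k(G) \cdot |G|$ (obtained by summing $|C_G(u)|$ over $u$, and then using $|C_G(u)| = |G|/|u^G|$ and grouping by conjugacy class) gives $\dc(G) = k(G)/|G|$. The same identity for $N$ and $G/N$, combined with $|G| = |N| \cdot |G/N|$, turns $\dc(G) \leqslant \dc(N) \cdot \dc(G/N)$ into the combinatorial inequality above.

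Let $\pi \colon G \twoheadrightarrow G/N$. Each $G$-conjugacy class projects under $\pi$ onto a single conjugacy class of $G/N$, so we may partition $k(G) = \sum_{\bar C} r(\bar C)$, where $\bar C$ runs over conjugacy classes of $G/N$ and $r(\bar C)$ counts the $G$-conjugacy classes contained in $\pi^{-1}(\bar C)$. It suffices to prove the fiber bound $r(\bar C) \leqslant k(N)$ for each $\bar C$; summing then yields $k(G) \leqslant k(N) \cdot k(G/N)$ and, dividing by $|G|$, the inequality we want.

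To prove the fiber bound, pick $\bar g \in \bar C$ and a lift $g \in G$. The conjugation action of $G$ on $\pi^{-1}(\bar C)$ permutes transitively the $N$-cosets contained there, so every $G$-orbit meets the coset $gN$, and meets it in an orbit of the stabilizer $K = \pi^{-1}(C_{G/N}(\bar g))$ acting on $gN$ by conjugation. Since $N \unlhd K$, each $K$-orbit is a union of $N$-orbits, hence $r(\bar C)$ is at most the number of $N$-orbits on $gN$. The action of $n \in N$ on $gm \in gN$ is $gm \mapsto g\,(g^{-1}ng)\, m\, n^{-1}$, so under the bijection $gm \leftrightarrow m$ between $gN$ and $N$ it becomes the twisted conjugation $n \cdot m = \tau_g(n)\, m\, n^{-1}$, where $\tau_g \in \operatorname{Aut}(N)$ is the automorphism $n \mapsto g^{-1}ng$. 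The main obstacle I expect is this last count: a Burnside orbit-counting argument (using $\sum_{m \in C} |C_N(m)| = |N|$ for every conjugacy class $C$ of $N$) should identify the number of orbits of the twisted action with the number of conjugacy classes of $N$ setwise fixed by $\tau_g$, which is trivially at most $k(N)$. Once this last step is secured, combining it with the partition $k(G) = \sum_{\bar C} r(\bar C)$ finishes the proof.
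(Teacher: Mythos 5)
The paper itself gives no proof of this lemma; it is stated purely as a citation to Gallagher's 1970 article, so there is no in-text argument to compare against. Your argument is correct, and it is essentially the standard one (up to presentation, Gallagher's own): pass to the class-counting identity $\dc(G)=k(G)/|G|$, reduce the inequality to $k(G)\leqslant k(N)\cdot k(G/N)$, and establish the fiber bound $r(\bar C)\leqslant k(N)$ via the observation that every $G$-orbit in $\pi^{-1}(\bar C)$ meets the single coset $gN$, where it is a $K$-orbit for $K=\pi^{-1}(C_{G/N}(\bar g))\geqslant N$.

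The one step you flagged as uncertain does close. For the twisted action $n\cdot m=\tau_g(n)\,m\,n^{-1}$ on $N$, the fixed-point set of $n$ is $\{m\in N : mnm^{-1}=\tau_g(n)\}$; this is empty when $n\not\sim_N\tau_g(n)$ and is a coset of $C_N(n)$, hence of size $|C_N(n)|$, otherwise. Since $\tau_g\in\operatorname{Aut}(N)$ permutes $N$-conjugacy classes, the set $\{n:n\sim_N\tau_g(n)\}$ is exactly the union of the $\tau_g$-invariant classes, and $\sum_{m\in C}|C_N(m)|=|N|$ over each such class $C$ gives, by Burnside, that the number of twisted orbits equals the number of $\tau_g$-invariant classes, which is $\leqslant k(N)$ as you claimed. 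If you prefer, you can bypass the change of variables and apply Burnside directly to the honest conjugation action of $N$ on $gN$: the fixed-point set of $n\in N$ is $C_G(n)\cap gN$, which is either empty or a coset of $C_N(n)$, so the number of $N$-orbits on $gN$ is $\tfrac{1}{|N|}\sum_{n\in N}|C_G(n)\cap gN|\leqslant\tfrac{1}{|N|}\sum_{n\in N}|C_N(n)|=k(N)$ — same content, slightly shorter.
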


We develop now a simpler version of this result for infinite groups, which will be
enough to prove Theorem~\ref{thm:main}.

\begin{prop}\label{prop:rf}
Let $G$ be a finitely generated subexponentially growing group, and let $X$ be a finite
generating system for $G$. Then, for any finite quotient $G/N$, we have
 $$
\dc_X(G) \leqslant \dc(G/N).
 $$
\end{prop}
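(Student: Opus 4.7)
The plan is to exploit the natural partition of $G$ into the $[G:N]$ cosets of $N$ and to push the counting of commuting pairs in $\ball_X(n)^2$ down to $G/N$, using Proposition~\ref{prop:bv} to control how each coset is sampled by the balls.

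First, let $\pi\colon G\to G/N$ denote the quotient map, and fix transversals $a_1,\ldots,a_d$ with $d=[G:N]$. If a pair $(u,v)\in \ball_X(n)^2$ satisfies $uv=vu$ in $G$, then obviously $\pi(u)\pi(v)=\pi(v)\pi(u)$ in $G/N$; so, grouping pairs according to which cosets $u$ and $v$ lie in, I get
\begin{equation*}
|\{(u,v)\in \ball_X(n)^2 \ : \ uv=vu\}| \,\leqslant \sum_{\substack{1\leqslant i,j\leqslant d \\ a_ia_j=a_ja_i \text{ in } G/N}} |a_iN\cap \ball_X(n)|\cdot |a_jN\cap \ball_X(n)|.
\end{equation*}
Dividing by $|\ball_X(n)|^2$ yields an inequality whose right-hand side is a finite sum (of length at most $d^2$, hence bounded independently of $n$) of products of quotients of the form $|a_iN\cap \ball_X(n)|/|\ball_X(n)|$.

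Next, I invoke Proposition~\ref{prop:bv} (which needs exactly the subexponential growth hypothesis; see also Remark~\ref{rem:freegp}) to conclude that, for each $i$,
\begin{equation*}
\lim_{n\to\infty} \frac{|a_iN\cap \ball_X(n)|}{|\ball_X(n)|} = \frac{1}{d}.
\end{equation*}
Since the right-hand side of the previous display is a fixed finite sum of continuous functions of these quantities, its limsup equals the value obtained by replacing every ratio by $1/d$. Therefore
\begin{equation*}
\dc_X(G)\,\leqslant\, \sum_{\substack{1\leqslant i,j\leqslant d \\ a_ia_j=a_ja_i}} \frac{1}{d^2} \,=\, \frac{|\{(\bar u,\bar v)\in (G/N)^2 \ : \ \bar u\bar v=\bar v\bar u\}|}{[G:N]^2} \,=\, \dc(G/N),
\end{equation*}
which is the desired inequality.

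The only nontrivial ingredient is the equidistribution of cosets in balls, which is precisely Proposition~\ref{prop:bv}; everything else is a straightforward counting argument. The one subtlety worth flagging is that, although $\dc_X(G)$ is only a $\limsup$, the finiteness of the coset decomposition lets me turn it into a genuine limit of the majorizing expression, so no auxiliary extraction of subsequences is needed.
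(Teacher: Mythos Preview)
Your proof is correct and follows essentially the same approach as the paper: both partition $\ball_X(n)$ into cosets of $N$, use that commuting pairs in $G$ map to commuting pairs in $G/N$, and invoke Proposition~\ref{prop:bv} for the equidistribution of cosets. The only cosmetic difference is that the paper phrases the final step as an $\varepsilon$--$\delta$ contradiction argument, whereas you take the limit of the majorizing finite sum directly; your version is arguably cleaner.
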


\begin{proof}
Let $N\unlhd G$ be a normal subgroup of $G$ of index, say, $[G:N]=d$. We will show that
$\dc_X(G)\leqslant \dc(G/N)$.

By Proposition~\ref{prop:bv}, for every $g\in G$ we have
 $$
\lim_{n\to \infty} \frac{| gN\cap \ball_X(n)|}{|\ball_X(n)|}=\frac{1}{d},
 $$
independently from $X$ and $g$; additionally, this is a real limit, not just a
$\limsup$. Since there are finitely many classes modulo $N$, the previous limit is
\emph{uniform} on $g$, i.e., for every $\e>0$ there exists $n_0$ such that, for every
$n\geqslant n_0$ and \emph{all} $g\in G$,
 \begin{equation}\label{unif}
\left(\frac{1}{d}-\varepsilon \right)|\ball_X(n)|\leqslant |gN\cap \ball_X(n)|\leqslant
\left(\frac{1}{d}+\varepsilon \right)|\ball_X(n)|.
 \end{equation}

Now, suppose $\dc_X(G)>\dc(G/N)$ and let us find a contradiction. By definition, this
means that there exist $\delta>0$ for which
 $$
\frac{|\{(u,v)\in (\ball_X(n))^2\ : \  uv=vu \}|}{|\ball_X(n)|^2}>\dc(G/N)+\delta
 $$
for infinitely many $n$'s. In view of this $\delta$, take $\varepsilon>0$ small enough
so that $\e d(2+\e d)\leqslant \delta$, and we have~\eqref{unif} for all but finitely
many $n$'s. Combining both assertions, there is a big enough $n$ such that
\begin{eqnarray*}
\dc(G/N)+\delta & < & \frac{|\{ (u,v)\in (\ball_X(n))^2 \ : \  uv=vu \} |}{|\ball_X(n)|^2} \\
& \leqslant & \frac{1}{|\ball_X(n)|^2}\, |\{(\overline{u},\overline{v}) \in (G/N)^2 \ : \
\overline{u}\,\overline{v}=\overline{v}\,\overline{u}\}|\, \left(\frac{1}{d}+\e
\right)^2|\ball_X(n)|^2 \\ & = & \dfrac{|\{(\overline{u},\overline{v}) \in
(G/N)^2 \ : \  \overline{u}\,\overline{v}=\overline{v}\,\overline{u}\}|}{d^2}(1+\e d)^2
\\ & \leqslant & \dfrac{|\{(\overline{u},\overline{v}) \in
(G/N)^2 \ : \  \overline{u}\,\overline{v}=\overline{v}\,\overline{u}\}|}{d^2} +2\e d+\e^2d^2
\\ & = & \dc(G/N) +\e d(2+\e d),
\end{eqnarray*}
where the second inequality comes from the facts that $uv=_G vu$ implies
$\overline{u}\,\overline{v}=_{G/N}\overline{v}\,\overline{u}$ and that, according
to~\eqref{unif}, every class in $G/N$ has, at most, $\left(1/d+\varepsilon
\right)|\ball_X(n)|$ representatives in $\ball_X(n)$. We deduce $\delta <\e d(2+\e d)$,
which is a contradiction.
\end{proof}

\begin{proof}[Proof of Theorem~\ref{thm:main}]
Recall that a group $G$ is {\it residually finite} if, for every non-trivial element
$1\neq g\in G$, there is a finite quotient of $G$ where the image of $g$ is
non-trivial. Equivalently, for every $g\neq 1$, there exists a finite index normal
subgroup $N\unlhd G$ such that $g\not\in N$. When $G$ is finitely generated, we can
always additionally take this subgroup to be \emph{characteristic in $G$} (i.e.,
invariant under every automorphism of $G$): take, for instance, $K\leqslant G$ to be the intersection of all subgroups of $G$ whose index is the same as that of $N$ (there are finitely many, so $K\leqslant N$ is still of finite index in $G$).

Assertion~(ii) follows directly from Proposition~\ref{prop:rf}: one implication is
trivial; for the other, if $G$ satisfies $\dc_X(G)>5/8$ then all its finite quotients
do and so, by Gustafson's Theorem~\ref{thm:5/8}, they all are abelian. This already
implies that $G$ is itself abelian: if $g_1,g_2\in G$ do not commute, then $1\neq [g_1,
g_2]$ would survive in some finite quotient $Q=G/N$, contradicting the fact that such
$Q$ is abelian.

For part (i), assume $G$ is virtually abelian and let $H\leqslant G$ be an abelian
subgroup of finite index. Then $G$ has polynomial growth, and we can apply
Proposition~\ref{cor:vab}: since $\dc_Y(H)=1>0$ we deduce $\dc_X(G)>0$ (where $Y$ is
any finite generating set for $H$).

Conversely, assume that $G$ is not virtually abelian, and let us prove that
$\dc_X(G)=0$. Knowing that $G$ is finitely generated, residually finite, and not
virtually abelian, we can choose two non-commuting elements $g_1,g_2\in G$ and a
characteristic subgroup $K_1$ of finite index in $G$ such that $[g_1, g_2]\not\in K_1$ (hence, $G/K_1$ is non-abelian and so, $\dc(G/K_1) \leq 5/8$). Clearly, these three properties go to finite index subgroups and so we can repeat the construction and get a descending sequence of subgroups,
 $$
\cdots \unlhd K_i \unlhd K_{i-1} \unlhd \cdots \unlhd K_2 \unlhd K_1 \unlhd K_0=G,
 $$
each characteristic and of finite index in the previous one, and such that
$\dc(K_i/K_{i+1}) \leq 5/8$. Then, for every $i$, $[G:K_i]<\infty$ and $K_i$ is
characteristic (and so normal) in $G$. Now, since $(G/K_{i})/(K_{i-1}/K_{i})
=G/K_{i-1}$, Lemma~\ref{lem:formula-finite} tells us that
 $$
\dc(G/K_{i})\leqslant \dc(K_{i-1}/K_{i})\cdot \dc(G/K_{i-1}) \leqslant \frac{5}{8}\dc(G/K_{i-1}),
 $$
for every $i\geqslant 1$. By induction, $\dc(G/K_i)\leqslant (5/8)^i$ and then, by
Proposition~\ref{prop:rf}, we get
 $$
\dc_X(G)\leqslant \dc(G/K_i)\leqslant (5/8)^i.
 $$
Since this is true for every $i\geqslant 1$, we conclude that $\dc_X(G)=0$.
\end{proof}

\section{Hyperbolic groups}\label{4}

We now give another criterion to show that certain groups have degree of commutativity
equal to zero. It will apply to (many) exponentially growing groups, not contained in
the results from the previous sections.

\begin{lemma}\label{lem:exp}
Let $G$ be a finitely generated group, and let $X$ be a finite generating system for
$G$. Suppose that there exists a subset $\cN \subseteq G$ satisfying the following
conditions:
\begin{enumerate}
\item[(i)] $\cN$ is \emph{$X$-negligible}, i.e., $\lim_{n\to \infty} \dfrac{|\cN \cap
    \ball_X(n)|}{|\ball_X(n)|}=0$;
\item[(ii)] $\lim_{n\to \infty}\dfrac{|C(g)\cap \ball_X(n)|}{|\ball_X(n)|}=0$ uniformly in $g\in G \setminus \cN$.
\end{enumerate}
Then, $\dc_X(G)=0$.
\end{lemma}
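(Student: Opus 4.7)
The plan is to compute the number of commuting pairs in $\ball_X(n)$ by summing the size of the centralizer intersected with $\ball_X(n)$ over all first coordinates, and then to split this sum into a negligible part (where the first coordinate lies in $\cN$) and a part where the uniform bound from hypothesis (ii) kicks in.

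First, I would rewrite the numerator as
$$
|\{(u,v)\in (\ball_X(n))^2 \ : \ uv=vu\}| \,=\, \sum_{u\in \ball_X(n)} |C(u)\cap \ball_X(n)|,
$$
and split the right-hand side according to whether $u\in \cN$ or $u\in \ball_X(n)\setminus \cN$.

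For the first piece I would use the trivial bound $|C(u)\cap \ball_X(n)|\leqslant |\ball_X(n)|$, which gives a contribution of at most $|\cN\cap \ball_X(n)|\cdot |\ball_X(n)|$; after dividing by $|\ball_X(n)|^2$ this tends to $0$ by hypothesis~(i). For the second piece, given $\e>0$, the uniformity in~(ii) provides some $n_0$ such that $|C(g)\cap \ball_X(n)|\leqslant \e\, |\ball_X(n)|$ for every $n\geqslant n_0$ and every $g\in G\setminus \cN$ simultaneously; summing over at most $|\ball_X(n)|$ values of $u$ gives a contribution of at most $\e\, |\ball_X(n)|^2$. Combining both estimates,
$$
\frac{|\{(u,v)\in (\ball_X(n))^2 \ : \ uv=vu\}|}{|\ball_X(n)|^2} \,\leqslant\, \frac{|\cN\cap \ball_X(n)|}{|\ball_X(n)|}+\e,
$$
for $n$ large. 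Taking $\limsup$ and then letting $\e\to 0$ yields $\dc_X(G)=0$.

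The only subtle point is that hypothesis~(ii) must be used \emph{uniformly} in $g$; without uniformity the second piece could not be controlled by a single $\e$ across all $u\in \ball_X(n)\setminus \cN$ at the same radius $n$. Everything else is a routine two-part estimate, so I do not anticipate a real obstacle beyond making the uniformity explicit.
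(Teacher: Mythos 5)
Your proposal is correct and follows exactly the same approach as the paper: decompose $\sum_{u\in \ball_X(n)}|C(u)\cap\ball_X(n)|$ according to whether $u\in\cN$ or not, bound the first part trivially via negligibility of $\cN$, and control the second part using the uniform estimate from hypothesis~(ii). The only cosmetic difference is that the paper folds hypothesis~(i) into an explicit $\e/2$ bound rather than taking the limsup of the term $|\cN\cap\ball_X(n)|/|\ball_X(n)|$ at the end.
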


\begin{proof}

By the two hypothesis, given $\e >0$ there exists $n_0$ and $n_1$ such that, for $n\geqslant \max \{ n_0, n_1\}$,  we have $|\cN \cap \ball_X(n)| <\frac{\e}{2}|\ball_X(n)|$ and $|C(g)\cap \ball_X(n)|<\frac{\e}{2}|\ball_X(n)|$. Hence,
\begin{align*}
|\{(u,v)\in (\ball_X(n))^2\mid uv=vu\}| & = \sum\limits_{u\in \ball_X(n)} |C(u)\cap \ball_X(n)| \\ & = \sum\limits_{u\in \ball_X(n)\setminus \cN} |C(u)\cap \ball_X(n)|+\sum\limits_{u\in \cN\cap \ball_X(n)} |C(u)\cap \ball_X(n)| \\ & \leqslant\sum \limits_{u\in \ball_X(n)\setminus \cN}   \frac{\e}{2}|\ball_X(n)| +\sum\limits_{u\in \cN\cap \ball_X(n)} |\ball_X(n)| \\ & \leqslant \frac{\e}{2}|\ball_X(n)|^2 + \frac{\e}{2}|\ball_X(n)|^2  \\ & =\e |\ball_X(n)|^2.
\end{align*}
Therefore, $\dc_X(G)=0$ (with existence of the real limit, not just a $\limsup$).
\end{proof}


\begin{proof}[Proof of Theorem \ref{thm:nafs}]
Fix any finite generating set $X$ for $G$. Since $G$ is non-elementary hyperbolic it
has exponential growth, i.e., $\lim_{n\to \infty} |\ball_X(n+1)|/|\ball_X(n)|
=\lambda>1$.

For the whole proof, fix $\e_0>0$ so that $\lambda -\e_0 >1$.

The above limit means that $\lambda -\e_0 <|\ball_X(n+1)|/|\ball_X(n)|<\lambda +\e_0$,
for $n\gg 0$. Hence, using induction, there exist constants $C_1$ and $C_2$ such that
$C_1 (\lambda-\e_0 )^n\leqslant |\ball_X(n)| \leqslant C_2 (\lambda+\e_0 )^n$ for all $n\in \N$. By induction on $m$, it also follows that $|\ball_X(n)|/|\ball_X(n+m)| \leqslant 1/(\lambda -\e_0 )^m$, for $n\gg 0$ and for all $m\geqslant 1$.

Let $\cN$ be the set of torsion elements in $G$; we shall see that the two conditions
in Lemma~\ref{lem:exp} are satisfied with respect to this set. To start, it was proved
by Dani, see~\cite[Theorem 1.1]{D} that there exist constants $D_1$ and $D_2$ such that
$|\cN \cap \ball_X(n)|\leqslant D_1 |\ball_X(\lceil \frac n2 \rceil +D_2)|$. Therefore,
for $n\gg 0$ we have
 $$
\frac{|\cN \cap \ball_X(n)|}{|\ball_X(n)|}\leqslant \frac{D_1 |\ball_X(\lceil \frac n2
\rceil +D_2)|}{|\ball_X(n)|} \leqslant \frac{D_1}{(\lambda-\e_0 )^{n-\lceil\frac n2 \rceil -D_2}} =\frac{D_1}{(\lambda-\e_0 )^{\lfloor \frac n2 \rfloor -D_2}}
 $$
so, $\cN$ is negligible and condition~(i) in Lemma~\ref{lem:exp} is satisfied.

Before proving~(ii), let us recall a few well-known facts about hyperbolic groups.  For $g\in G$, $\tau(g)=\lim_{n\to \infty} |g^n|_X/n$ denotes the \emph{stable translation length} of $g$. Since $|g^{n+m}|_X \leqslant |g^n|_X +|g^m|_X$, Fekete's lemma gives that $\tau(g)=\inf_{n\in \N} \{ |g^n|_X /n \}$ and so, $|n|\tau(g)\leqslant |g^n|_X$ for all $n\in \Z$. The hyperbolicity of $G$ implies that these translation lengths are discrete (see~\cite[III.$\Gamma$.3.17]{BH}); in particular, there is a positive integer $p$ such that $\tau(g)\geqslant 1/p$ for all $g\in G\setminus \cN$. Also, centralizers of elements of infinite order in hyperbolic groups are virtually cyclic, see~\cite[Corollary~III.$\Gamma$.3.10]{BH}, and there is a bound $M>0$ on the size of finite subgroups of $G$ (depending only on the hyperbolicity constant of $G$), see~\cite[Theorem~III.$\Gamma$.3.2]{BH}.

Let now $C=\langle g\rangle$ be an infinite cyclic subgroup of $G$. Then, $g^k \in C \cap \ball_X(n)$ implies that $|k|/p \leq |k|\tau(g) \leq |g^k|_X \leq n$, and hence $|k|\leq pn$; therefore,
 $$
|C \cap \ball_X(n)| \leqslant 2pn+1.
 $$
Furthermore, we can also deduce that, for any $x\in G$, the coset of $C$ containing $x$ also grows linearly: this is because if $xC \cap \ball_X(n)$ is non-empty, it will contain some $w$ of length at most $n$, $w^{-1}(xC \cap \ball_X(n)) \subseteq C \cap \ball_X(2n)$ and hence,
 $$
|xC \cap \ball_X(n)| =|w^{-1}(xC \cap \ball_X(n))| \leqslant |C\cap \ball_X(2n)| \leqslant 4pn+1.
 $$

To check condition~(ii) from Lemma~\ref{lem:exp}, take $g\in G\setminus \cN$. The centralizer $C_G(g)$ is virtually cyclic and, by a classical result, it is also of type finite-by-$\mathbb{Z}$ or finite-by-$\mathbb{D}_\infty$, where $\mathbb{D}_\infty$ is the infinite dihedral group (see~\cite[Lemma~4.1]{Wall}). Passing to a subgroup of index two $H\leqslant C_G(g)$ if necessary, we have a short exact sequence $1\to K\to H\to \Z\to 1$, with $K (\leqslant H\leqslant C_G(g)\leqslant G)$ finite. Since this sequence splits, $\Z$ is a subgroup of $H$ of index $|K|\leqslant M$ and so, $C_G(g)$ has a subgroup of index at most $2M$ which is infinite cyclic. Putting these results together, we get that 
 $$
\frac{|C(g)\cap \ball_X(n)|}{|\ball_X(n)|}\leqslant \frac{2M(4pn+1)}{C_1(\lambda-\e_0)^n} \longrightarrow 0
 $$
uniformly on $g\in G \setminus \cN$, when $n\to \infty$. So, condition~(ii) in Lemma~\ref{lem:exp} is satisfied.

Therefore, from Lemma~\ref{lem:exp} we conclude that $\dc_X(G)=0$.
\end{proof}

\noindent{\textbf{{Acknowledgments}}} The authors acknowledge partial support through
Spanish grant number MTM2014-54896.


\end{document}